\title{Group trisections and smooth $4$--manifolds}
\author{Aaron Abrams}
\address{Mathematics Department\\
Washington and Lee University\\\newline
Lexington VA 24450 }
\email{abramsa@wlu.edu}
\author{David T Gay}
\address{Euclid Lab\\ 160 Milledge Terrace\\ Athens, GA 30606\\\newline Department of Mathematics\\ University
  of Georgia\\ Athens, GA 30602}
\email{d.gay@euclidlab.org}
\author{Robion Kirby}
\address{Department of Mathematics\\ University of California\\ Berkeley, CA 94720}
\email{kirby@math.berkeley.edu}
\newtheorem{theorem}{Theorem}
\newtheorem{corollary}[theorem]{Corollary}
\theoremstyle{definition}
\newtheorem{definition}[theorem]{Definition}
\def\Z{\mathbb Z}
\def\cG{\mathcal{G}}
\def\cM{\mathcal{M}}
\newcommand{\into}{\ensuremath{\hookrightarrow}}
\begin{document}

\begin{abstract}    
A trisection of a smooth, closed, oriented $4$--manifold is a decomposition into three $4$--dimensional $1$--handlebodies meeting pairwise in $3$--dimensional $1$--handlebodies, with triple intersection a closed surface. The fundamental groups of the surface, the $3$--dimensional handlebodies, the $4$--dimensional handlebodies, and the closed $4$--manifold, with homomorphisms between them induced by inclusion, form a commutative diagram of epimorphisms, which we call a trisection of the $4$--manifold group. A trisected $4$--manifold thus gives a trisected group; here we show that every trisected group uniquely determines a trisected $4$--manifold. Together with Gay and Kirby's existence and uniqueness theorem for $4$--manifold trisections, this gives a bijection from group trisections modulo isomorphism and a certain stabilization operation to smooth, closed, connected, oriented $4$--manifolds modulo diffeomorphism. As a consequence, smooth $4$--manifold topology is, in principle, entirely group theoretic. For example, the smooth $4$--dimensional Poincar\'{e} conjecture can be reformulated as a purely group theoretic statement. \footnote{This work was supported by NSF grant DMS-1207721 and by two grants from the Simons Foundation (\#359873 to David Gay and \#281189 to Aaron Abrams).}
\end{abstract}

\maketitle


Let $g$ and $k$ be integers with $g \geq k \geq 0$. We fix the following groups, described explicitly by presentations:
\begin{itemize}
 \item $S_0 = \{1\}$ and, for $g>0$, $S_g = \langle a_1,b_1, \ldots, a_g, b_g \mid [a_1,b_1] \ldots [a_g,b_g] \rangle$, i.e. the standard genus $g$ surface group with standard labelled generators. We identify this in the obvious way with $\pi_1(\#^g S^1 \times S^1,*)$.
 \item $H_0 = \{1\}$ and, for $g>0$, $H_g = \langle x_1, \ldots, x_g \rangle$, i.e. a free group of rank $g$ with $g$ labelled generators. We identify this in the obvious way with $\pi_1(\natural^g S^1 \times B^2,*)$. Note that, if $g < g'$, then $H_g \subset H_{g'}$.
 \item $Z_0 = \{1\}$ and, for $k>0$, $Z_k = \langle z_1, \ldots, z_k \rangle$, i.e. a free group of rank $k$ with $k$ labelled generators. We identify this in the obvious way with $\pi_1(\natural^k S^1 \times B^3,*)$. Again, if $k < k'$ then $Z_k \subset Z_{k'}$.
\end{itemize}

Let $V$ denote the set of vertices of a cube, and let $E$ denote the set of edges.

\begin{definition}
 A $(g,k)$--trisection of a group $G$ is a commutative cube of groups as shown below, such that each homomorphism is surjective and each face is a pushout.

\begin{equation*}
\includegraphics{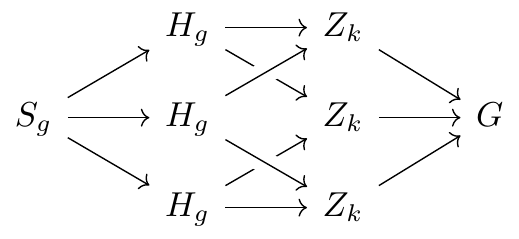}
\end{equation*}

 We label the groups $\{G_v \mid v \in V\}$ and the maps $\{f_e \mid e \in E\}$, so that a trisection of $G$ is the pair $(\{G_v\},\{f_e\})$. An isomorphism from a trisection $(\{G_v\},\{f_e\})$ of $G$ to a trisection $(\{G'_v\},\{f'_e\})$ of $G'$ is a collection of isomorphisms $h_v : G_{v} \to G'_{v}$, for all $v \in V$, commuting with the $f_e$'s and $f'_e$'s.
\end{definition}

There is a unique $(0,0)$--trisection of the trivial group.
Figure~\ref{F:31TrivTrisection} illustrates a $(3,1)$--trisection of the trivial group, which we will call ``the standard trivial $(3,1)$--trisection.'' Figure~\ref{F:31OnSurface} illustrates the same diagram more topologically. (For trisections with $g=1$ and $g=2$, see the basic $4$--manifold trisection examples in~\cite{GayKirby}; in fact, the $4$--dimensional uniqueness results in~\cite{MeierZupan}, together with Theorem~\ref{T:GroupsToManifolds} below, give uniqueness statements for group trisections with $g \leq 2$.)

\begin{figure}[h!]
\centering
\includegraphics{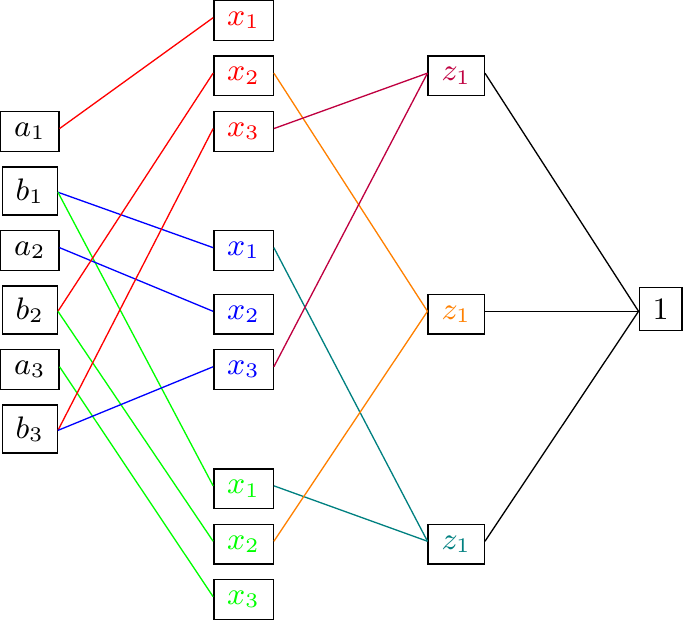}

\caption{\label{F:31TrivTrisection} A $(3,1)$--trisection of the trivial group. All maps send generators to generators or to $1$; the diagram 
shows where each map sends each generator, with the understanding that generators not shown to be mapped anywhere are mapped to $1$. 
}
\end{figure}  

\begin{figure}[h!]
 \labellist
 \tiny\hair 2pt
 \pinlabel $x_1$ [b] at 24 31
 \pinlabel $x_2$ [b] at 60 31
 \pinlabel $x_3$ [b] at 96 31
 \pinlabel $a_1$ [t] at 26 3
 \pinlabel $b_1$ [r] at 12 16
 \pinlabel $a_2$ [t] at 62 3
 \pinlabel $b_2$ [r] at 76 16
 \pinlabel $a_3$ [t] at 97 3
 \pinlabel $b_3$ [l] at 110 16
 \endlabellist
 \centering
 \includegraphics[width=.4\textwidth]{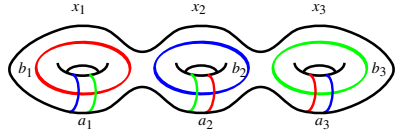}
\caption{\label{F:31OnSurface} 
The trivial $(3,1)$--trisection illustrated topologically; each color describes a handlebody filling of the genus $3$ surface, so that the curves specify the kernels of the homomorphisms.
}
 
\end{figure}

\begin{definition}
 Given a $(g,k)$--trisection $(\{G_v\},\{f_e\})$ of $G$ and a $(g',k')$--trisection $(\{G'_v\},\{f'_e\})$ of $G'$, there is a natural ``connected sum'' $(g''=g+g',k''=k+k')$--trisection $(\{G''_v\},\{f''_e\})$ of $G'' = G * G'$ defined by first shifting all the indices of the generators for the $G'_v$'s by either $g$ (when $G'_v = S_{g'}$ or $G'_v = H_{g'}$) or $k$ (when $G'_v = Z_{k'}$) and then, for each generator $y$ of $G''_v$, declaring $f''_e(y)$ to be either $f_e(y)$ or $f'_e(y)$ according to whether $y$ is in $G_v$ or $G'_v$. 
\end{definition}

\begin{definition}
 The stabilization of a group trisection is the connected sum of the given trisection with the standard trivial $(3,1)$--trisection. Thus the stabilization of a $(g,k)$--trisection of $G$ is a $(g+3,k+1)$--trisection of the same group $G = G*\{1\}$.
\end{definition}

\begin{definition}~\cite{GayKirby}
 A $(g,k)$--trisection of a smooth, closed, oriented, connected $4$--manifold $X$ is a decomposition $X=X_1 \cup X_2 \cup X_3$ such that:
 \begin{itemize}
  \item Each $X_i$ is diffeomorphic to $\natural^k S^1 \times B^3$.
  \item Each $X_i \cap X_j$, with $i \neq j$, is diffeomorphic to $\natural^g S^1 \times B^2$.
  \item $X_1 \cap X_2 \cap X_3$ is diffeomorphic to $\#^g S^1 \times S^1 = \Sigma_g$.
 \end{itemize}
 If $X$ is equipped with a base point $p$, a based trisection of $(X,p)$ is a trisection with $p \in X_1 \cap X_2 \cap X_3$. A parametrized based trisection of $(X,p)$ is a based trisection equipped with fixed diffeomorphisms (the ``parametrizations'') from the $(X_i,p)$'s to $(\natural^k S^1 \times B^3,*)$, from the $(X_i \cap X_j,p)$'s to $(\natural^g S^1 \times B^2,*)$ and from $X_1 \cap X_2 \cap X_3$ to $(\#^g S^1 \times S^1 = \Sigma_g, *)$, where $*$ in each case indicates a standard fixed base point, respected by the standard inclusions $(\#^g S^1 \times S^1 = \Sigma_g, *) \into (\natural^g S^1 \times B^2,*) \into (\natural^k S^1 \times B^3,*)$.
\end{definition}

Henceforth all manifolds are smooth, oriented and connected. Until further notice, trisected $4$--manifolds are closed.

There is an obvious map from the set of parametrized based trisected $4$--manifolds to the set of trisected groups, which we will call $\cG$; the groups are the fundamental groups of the $X_i$'s and their intersections, after identification with standard models via the parametrizations, and the maps are those induced by inclusions composed with parametrizations.

The main result of this paper is that $\cG$ induces a bijection between trisected $4$--manifolds up to trisected diffeomorphism and trisected groups up to trisected isomorphism, and that this bijection respects stabilizations
in both categories. (A {\em trisected diffeomorphism} is simply a diffeomorphism respecting the decomposition.)

\begin{theorem} \label{T:GroupsToManifolds}
 There exists a map $\cM$ from the set of trisected groups to the set of (based, parametrized) trisected $4$--manifolds such that $\cM \circ \cG$ is the identity up to trisected diffeomorphism and $\cG \circ \cM$ is the identity up to trisected isomorphism. The unique $(0,0)$--trisection of $\{1\}$ maps to the unique $(0,0)$--trisection of $S^4$, the standard $(3,1)$--trisection of $\{1\}$ maps to the standard $(3,1)$--trisection of $S^4$, and connected sums of group trisections map to connected sums of $4$--manifold trisections. Thus $\cM$ induces a bijection between the set of trisected groups modulo isomorphism and stabilization and the set of smooth, closed, connected, oriented $4$--manifolds modulo orientation preserving diffeomorphism.
\end{theorem}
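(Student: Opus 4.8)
The heart of the matter is constructing $\cM$, which rests on a realization lemma for $3$--dimensional handlebodies; granting that lemma, $\cG\circ\cM=\id$ is essentially automatic, $\cM\circ\cG=\id$ follows from the corresponding uniqueness statement, and the final bijection is a formal consequence of the Gay--Kirby existence and uniqueness theorem. I would begin by repackaging the data: a group trisection of $G$ is the same thing as a triple of epimorphisms $\phi_1,\phi_2,\phi_3\colon S_g\to H_g$ (the three edges out of the surface vertex) such that each of the three pairwise pushouts of the $\phi_i$ is free of rank $k$. Indeed, a pushout of epimorphisms is an epimorphism and iterated pushouts compute the colimit of the cube, so once the three pairwise pushouts are identified with copies of $Z_k$ the vertex $G$ is forced to be that colimit and every remaining surjectivity and pushout condition holds automatically.

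To build a $4$--manifold from such a triple I would realize $S_g=\pi_1(\Sigma_g)$ and then realize each $\phi_i$ by a genus $g$ handlebody $V_i$ with $\partial V_i=\Sigma_g$ inducing $\phi_i\colon\pi_1(\Sigma_g)\to\pi_1(V_i)=H_g$; this realization (and its uniqueness rel $\Sigma_g$) is the key lemma, discussed below. Gluing the three handlebodies along $\Sigma_g$ produces closed $3$--manifolds $Y_i=V_j\cup_{\Sigma_g}V_l$ for $\{j,l\}=\{1,2,3\}\setminus\{i\}$, and by van Kampen $\pi_1(Y_i)$ is exactly the pushout of $\phi_j$ and $\phi_l$, hence free of rank $k$; so by the prime decomposition theorem together with the $3$--dimensional Poincar\'e conjecture, $Y_i\cong\#^k S^1\times S^2$. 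Each $Y_i$ then bounds a copy $X_i$ of $\natural^k S^1\times B^3$, which is unique up to diffeomorphism and, by Laudenbach--Po\'enaru's theorem that every self--diffeomorphism of $\#^k S^1\times S^2$ extends over $\natural^k S^1\times B^3$, unique in a strong enough sense (rel boundary) to be canonical. Gluing $X_1,X_2,X_3$ along the shared handlebodies $V_i=X_j\cap X_l$ yields a closed $4$--manifold $X$ with a $(g,k)$--trisection; after the routine bookkeeping of choosing compatible base points and parametrizations this defines $\cM$ on the given group trisection. Running van Kampen through this construction recomputes $\pi_1$ of each piece and of $X$ as the given groups, with inclusion--induced maps equal to the given $f_e$'s, so $\cG\circ\cM=\id$ up to isomorphism.

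For $\cM\circ\cG=\id$ up to trisected diffeomorphism I would use that a closed trisected $4$--manifold $X$ is determined, up to trisected diffeomorphism, by its own group trisection: its three $3$--dimensional pieces are handlebodies glued to the central surface, a handlebody with identified boundary is determined rel $\partial$ by the kernel of $\pi_1(\partial)\to\pi_1$ (a $3$--manifold uniqueness statement following e.g.\ from Waldhausen's theorem, handlebodies being Haken), and the $4$--dimensional pieces are then forced as above --- so rebuilding via $\cM$ returns $X$. The remaining assertions are direct checks: the $(0,0)$--trisection of $\{1\}$ produces the standard decomposition of $S^4$ into three $4$--balls meeting in $3$--balls along an $S^2$; the standard trivial $(3,1)$--trisection of $\{1\}$ produces the handlebody triple cut out by the curves of Figure~\ref{F:31TrivTrisection}, which one recognizes as the standard genus $3$ trisection diagram of $S^4$ in \cite{GayKirby}; and the connected--sum statement is immediate because the construction is additive, turning a connected sum of group trisections into boundary connected sums of the $V_i$ and $X_i$ over a connected sum of central surfaces, i.e.\ into a connected sum of trisected $4$--manifolds. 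Granting \cite{GayKirby}, the bijection modulo stabilization is then formal: $\cM$ descends to isomorphism classes, takes stabilization (connected sum with the standard trivial $(3,1)$--trisection) to stabilization, which does not change the underlying $4$--manifold, and composed with the forgetful map is onto diffeomorphism classes of $4$--manifolds by Gay--Kirby existence and $\cM\circ\cG=\id$; and if $\cM(T)$ and $\cM(T')$ are diffeomorphic then Gay--Kirby uniqueness makes their trisections diffeomorphic after some stabilizations, whereupon applying $\cG$ (which intertwines stabilization) and using $\cG\circ\cM=\id$ shows $T$ and $T'$ are stably isomorphic, the converse being clear.

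The step I expect to be the main obstacle is the handlebody realization-and-uniqueness lemma: every epimorphism $S_g\onto H_g$ is induced by a genus $g$ handlebody with boundary $\Sigma_g$, and the handlebody is determined rel $\Sigma_g$ --- equivalently, $\mathrm{Aut}(S_g)\times\mathrm{Aut}(H_g)$ acts transitively on epimorphisms $S_g\to H_g$, and a cut system on $\Sigma_g$ is determined up to handleslides and isotopy by the normal closure of its curves. The first reduction is linear algebra: the induced map on $H_1$ has Lagrangian kernel (the image of the pullback on $H^1$ is isotropic of half rank, since $H^2$ of a wedge of circles vanishes), and because $\mathrm{Mod}(\Sigma_g)$ surjects onto $\mathrm{Sp}(2g,\Z)$, which acts transitively on Lagrangian summands, one may normalize $\phi$ on homology; lifting this normalization from $H_1$ to $\pi_1$, together with the uniqueness, is where the real content lies (and where one may also appeal to known results on epimorphisms of surface groups onto free groups). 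Everything else --- the repackaging of the cube, the van Kampen computations, the invocations of Laudenbach--Po\'enaru and of the Poincar\'e conjecture, and the final formal deduction --- is comparatively routine.
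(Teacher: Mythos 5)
Your proposal follows essentially the same route as the paper: realize the three epimorphisms $S_g \onto H_g$ by handlebodies glued to a central $\Sigma_g$, identify the three resulting closed $3$--manifolds with free fundamental group as $\#^k S^1\times S^2$ via Kneser/Stallings plus Perelman, cap off uniquely using Laudenbach--Po\'enaru, and deduce the stable bijection formally from the Gay--Kirby existence and uniqueness theorem. The realization-and-uniqueness lemma you flag as the main obstacle is exactly what the paper disposes of by citing Leininger--Reid for existence (make a map to a wedge of $g$ circles transverse to one point on each circle and compress the preimage curves) and Dehn's Lemma for uniqueness of the handlebody up to isotopy, so your homological normalization detour is not needed.
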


Though it might not be obvious from a purely group-theoretic point of view, it follows from \cite{GayKirby} that every
finitely presented group admits a trisection, because every finitely presented group is the fundamental group of a closed, orientable $4$--manifold. Even more striking, perhaps, is that by Theorem \ref{T:GroupsToManifolds}
the collection of trisections of any particular group contains all the complexity of smooth $4$--manifolds with
the given fundamental group, including not just their homotopy types but also their diffeomorphism types.
In particular there is a subset 
of the trisections of the trivial group corresponding to the countably many exotic smooth structures on a given simply connected topological $4$--manifold, e.g. the K3 surface. (To get the full countable collection, it seems likely that $g$ must be unbounded.) An interesting problem is to understand the equivalence relation on group trisections that corresponds to {\em homeomorphisms} between $4$--manifolds.

Considering homotopy $4$--spheres, we have:

\begin{corollary}
 The smooth $4$--dimensional Poincar\'{e} conjecture is equivalent to the following statement: ``Every $(3k,k)$--trisection of the trivial group is stably equivalent to the trivial trisection of the trivial group.''
\end{corollary}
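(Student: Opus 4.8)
The plan is to deduce the corollary directly from Theorem~\ref{T:GroupsToManifolds} by chasing through what the bijection says about homotopy $4$--spheres. First I would recall that the smooth $4$--dimensional Poincar\'e conjecture asserts: every smooth, closed, connected, oriented $4$--manifold $X$ that is homotopy equivalent to $S^4$ is in fact diffeomorphic to $S^4$. A homotopy $4$--sphere is characterized (among closed oriented $4$--manifolds) by having trivial fundamental group and the homology of $S^4$; but by Poincar\'e duality the latter is automatic once $\pi_1 = \{1\}$ (and $\chi=2$), so in practice the condition on $X$ to be a homotopy sphere reduces to $\pi_1(X) = \{1\}$ together with the correct Euler characteristic. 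The key point is that if $X$ admits a $(g,k)$--trisection then $\chi(X) = 2 + g - 3k$, so $X$ is a homotopy sphere exactly when $\pi_1(X) = \{1\}$ and $g = 3k$. Hence homotopy $4$--spheres correspond, under $\cG$, precisely to $(3k,k)$--trisections of the trivial group.

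Next I would run the correspondence in both directions. Given a $(3k,k)$--trisection $\tau$ of the trivial group, apply $\cM$ to get a parametrized based trisected $4$--manifold $\cM(\tau)$; its underlying $4$--manifold $X$ has $\pi_1(X) = \{1\}$ because $\cG \circ \cM$ is the identity up to isomorphism and the $(3k,k)$ type is preserved, so $X$ is a homotopy $4$--sphere. Conversely, any homotopy $4$--sphere $X$, equipped with a base point, admits (by the existence half of the Gay--Kirby theorem) a $(g,k)$--trisection; after possibly stabilizing we may assume it is a $(3k,k)$--trisection (stabilization adds $(3,1)$, and since $g \ge k$ with $g - 3k = \chi - 2 = 0$ we already have $g=3k$, so no stabilization is even needed for the type), and applying $\cG$ yields a $(3k,k)$--trisection of $\pi_1(X) = \{1\}$. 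So the set of $(3k,k)$--trisections of the trivial group, modulo trisected isomorphism, maps bijectively (via $\cM$, with inverse $\cG$) onto the set of trisected homotopy $4$--spheres modulo trisected diffeomorphism.

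It remains to translate the two sides of the conjectured equivalence. On the group side: the standard trivial $(3,1)$--trisection maps under $\cM$ to the standard $(3,1)$--trisection of $S^4$, and connected sums map to connected sums, so a $k$--fold connected sum of standard trivial $(3,1)$--trisections --- which is exactly an iterated stabilization of the unique $(0,0)$--trisection of $\{1\}$ --- maps to the $k$--fold stabilization of the $(0,0)$--trisection of $S^4$. Thus ``$\tau$ is stably equivalent to the trivial trisection of the trivial group'' corresponds under $\cM$ to ``$\cM(\tau)$ is, after further stabilization, trisected--diffeomorphic to a stabilization of the standard trisection of $S^4$.'' Now I invoke the uniqueness half of the Gay--Kirby theorem (already available via Theorem~\ref{T:GroupsToManifolds} and \cite{GayKirby}): any two trisections of a \emph{fixed} $4$--manifold become diffeomorphic after stabilization. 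Therefore $\cM(\tau)$ admits a trisection stably equivalent to the standard one if and only if $\cM(\tau)$ is diffeomorphic to $S^4$ as a manifold. Combining with the bijection: ``every $(3k,k)$--trisection of $\{1\}$ is stably trivial'' $\iff$ ``every homotopy $4$--sphere arising as some $\cM(\tau)$ is diffeomorphic to $S^4$'' $\iff$ (since every homotopy $4$--sphere so arises) ``every homotopy $4$--sphere is diffeomorphic to $S^4$,'' which is the smooth $4$--dimensional Poincar\'e conjecture.

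The main obstacle, and the step requiring the most care, is the bookkeeping in the last paragraph: one must be sure that ``stably equivalent to the trivial trisection'' on the group side matches up \emph{exactly} with ``stably equivalent to the standard $S^4$ trisection'' on the manifold side (using that $\cM$ sends standard to standard and connected sums to connected sums), and then that the Gay--Kirby stable uniqueness theorem is precisely what converts a statement about stabilized trisections into a statement about the underlying diffeomorphism type. No genuinely new mathematics is needed beyond Theorem~\ref{T:GroupsToManifolds} and the cited existence/uniqueness results; the content of the corollary is entirely in the translation.
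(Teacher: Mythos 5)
Your proposal is correct and follows essentially the same route as the paper, whose proof simply records the key point (a $(3k,k)$--trisection of $\{1\}$ yields a simply connected $4$--manifold with $\chi = 2$, hence a homotopy $S^4$) and leaves the translation through Theorem~\ref{T:GroupsToManifolds} and the Gay--Kirby stable uniqueness theorem implicit. Incidentally, your formula $\chi = 2 + g - 3k$ is the correct one --- the paper's printed $2 - g + 3k$ is a sign typo --- and either way $\chi = 2$ forces $g = 3k$.
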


\begin{proof}
 A $(3k,k)$--trisection of the trivial group gives a $(3k,k)$--trisection of a simply connected $4$--manifold. The Euler characteristic of a $(g,k)$--trisected $4$--manifold is $2-g+3k$, so in this case we have an Euler characteristic $2$ simply connected $4$--manifold, i.e. a homotopy $S^4$.
\end{proof}

One approach to proving the Poincar\'e conjecture would be to prove first that there is a unique $(3,1)$--trisection of $\{1\}$, or at least that every $(3,1)$--trisection of $\{1\}$ gives a $4$--manifold diffeomorphic to $S^4$, and then prove that, for any $(3k,k)$--trisection of $\{1\}$, there is a nontrivial group element in the intersection of the kernels of the three maps $S_g \to H_g$ which can be represented as an embedded curve in the corresponding surface $\Sigma_g$. This would give an inductive proof since such an embedded curve would give us a way to decompose the given trisection as a connected sum of lower genus trisections. In fact, this would prove more than the Poincar\'{e} conjecture; it would also prove a $4$--dimensional analog of Waldhausen's theorem~\cite{Waldhausen}, to the effect that every trisection of $S^4$ is a stabilization of the trivial trisection and thus that any two trisections of $S^4$ of the same genus are isotopic. This strategy would be the exact $4$--dimensional parallel to the strategy outlined in~\cite{StallingsHowNot} for proving (or failing to prove) the $3$--dimensional Poincar\'{e} conjecture.

\begin{proof}[Proof of Theorem~\ref{T:GroupsToManifolds}]
 Given a $(g,k)$--trisection $(\{G_v\}, \{f_e\})$ of $G$, we will construct $\cM(\{G_v\}, \{f_e\})$ beginning with $\Sigma_g = \#^g S^1 \times S^1$. For each of the three maps $f_e : S_g \to H_g$, because these are epimorphisms it is a standard fact that there is a diffeomorphism $\phi_e: \Sigma_g \to \partial \natural^g S^1 \times B^2$ such that $\imath \circ \phi_e : \Sigma_g \hookrightarrow \natural^g S^1 \times B^2$ induces $f_e$ on $\pi_1$. See~\cite{LeiningerReid} for a proof; the sketch of the proof is as follows: Note that there is a map, well defined up to homotopy by $f_e$, from $\Sigma_g$ to a wedge of $g$ circles.  Make this transverse to one point of each circle, not the base point.  Then the inverse image of those points is a collection of embedded circles in $\Sigma_g$.  Add a $2$--handle to each circle, and then the new boundary is a collection of $2$--spheres.  Fill in each with $3$--balls resulting in a handlebody.

 Each $\phi_e$ is unique up to isotopy by Dehn's Lemma~\cite{Papakyriakopoulos}. Use these three diffeomorphisms to attach three copies of $\natural^g S^1 \times B^2$, crossed with $I$, to $\partial \Sigma_g \times D^2$ in the standard way, giving a $4$--manifold with three boundary components, each presented with a genus $g$ Heegaard splitting. (Note that the cyclic ordering of the three handlebodies is essential to determine the orientation of the resulting $4$--manifold, and that this is reflected in our definition of group trisection by the fact that the maps and groups are explicitly labelled by edges and vertices of a standard cube.)
 
 Because each pushout from the initial three maps gives a free group of rank $k$, we know that the three boundary components mentioned above are closed $3$--manifolds with rank $k$ free fundamental groups.  It is another well-known fact that each of these $3$--manifolds is diffeomorphic to $\#^k S^1 \times S^2$.  This follows from Kneser's conjecture
 (proved by Stallings~\cite{StallingsThesis}) that a free product decomposition of the fundamental group of a $3$--manifold 
 corresponds to a connected sum decomposition of the manifold, as well as Perelman's proof \cite{MorganTian}
 of the $3$--dimensional Poincar\'e conjecture that shows that no connected summand has trivial fundamental group.  
 A prime connected summand (i.e., one that doesn't decompose further) therefore has fundamental group $\Z$, and
a standard argument using the Sphere Theorem \cite{Papakyriakopoulos} and the Hurewicz and Whitehead theorems 
shows that an orientable prime 3-manifold with fundamental group $\Z$ must be $S^1\times S^2$.

 Any two ways of fillling in a connected sum of $S^1 \times S^2$'s with a $4$--dimensional $1$--handlebody differ by a diffeomorphism of the connected sum, and Laudenbach and Poenaru~\cite{LaudenbachPoenaru} proved  that any such diffeomorphism extends to a diffeomorphism of the handlebody.  So $X = \cM(\{G_v\}, \{f_e\})$, the result of filling in each $3$--manifold boundary component with a $4$--dimensional $1$--handlebody, is unique up to diffeomorphism.

 The main result of~\cite{GayKirby} is that every smooth, closed, connected, oriented $4$--manifold has a trisection, and that any two trisections of the same $4$--manifold become isotopic after performing some number of connected sums with the standard $(3,1)$--trisection of $S^4$. The connected sum operation and the $(3,1)$--trisection on the group side are constructed exactly to correspond to stabilization of manifolds via the map $\cM$.
\end{proof}

%
%
%
\bibliographystyle{plain}
%

%

\bibliography{GroupTrisections}

\end{document}